\numberwithin{equation}{section}
\newtheorem{theorem}{Theorem}[section]
\newtheorem{lemma}[theorem]{Lemma}
\newtheorem{proposition}[theorem]{Proposition}
\newtheorem{corollary}[theorem]{Corollary}
\theoremstyle{definition}
\theoremstyle{remark}
\newtheorem{remark}[theorem]{Remark}
\newtheorem{example}[theorem]{Example}
\newtheorem{acknowledgement}{Acknowledgement}
\newcommand{\depth}{\operatorname{depth}}
\newcommand{\reg}{\operatorname{reg}}
\newcommand{\fm}{\frak{m}}
\newcommand{\fa}{\frak{a}}
\begin{document}
\dedicatory{Dedicated to Professor Tony J. Puthenpurakal}
\author[Mafi]{ Amir Mafi}
\title[On the computation of the Ratliff-Rush closure]
{On the computation of the Ratliff-Rush closure, associated graded ring and invariance of a length}

\address{A. Mafi, Department of Mathematics, University of Kurdistan, P.O. Box: 416, Sanandaj,
Iran.} \email{a\_mafi@ipm.ir}

\subjclass[2000]{13A30, 13D40, 13H10.}

\keywords{Ratliff-Rush filtration, Minimal reduction, Associated graded ring.}

\begin{abstract} Let $(R,\fm)$ be a Cohen-Macaulay local ring of positive dimension $d$ and infinite residue field. Let $I$ be an $\fm$-primary ideal of $R$ and $J$ be a minimal reduction of $I$. In this paper we show that if $\widetilde{I^k}=I^k$ and $J\cap I^n=JI^{n-1}$ for all $n\geq k+2$, then $\widetilde{I^n}=I^n$ for all $n\geq k$. As a consequence, we can deduce that if $r_J(I)=2$, then $\widetilde{I}=I$ if and only if $\widetilde{I^n}=I^n$ for all $n\geq 1$. Moreover, we recover some main results of [\ref{Cpv}] and [\ref{G}]. Finally, we give a counter example for Question 3 of [\ref{P1}].

\end{abstract}

\maketitle

\section{Introduction}
Throughout this paper, we assume that $(R,\fm)$ is a Cohen-Macaulay local ring of positive dimension $d$, infinite residue field and $I$ an $\fm$-primary ideal of $R$. An ideal $J\subseteq I$ is called a reduction of $I$ if $I^{n+1}=JI^n$ for some $n\in\mathbb{N}$. A reduction $J$ is called a minimal reduction of $I$ if it does not properly contain a reduction of $I$. The least such $n$ is called the reduction number of $I$ with respect to $J$, and denoted by $r_J(I)$. These notions were introduced by Northcott and Rees [\ref{Nr}], where they proved that minimal reductions of $I$ always exist if the residue field of $R$ is infinite. Recall that $x\in I$ is a superficial element of $I$ if there exists $k\in\mathbb{N}_0$ such that $I^{n+1}:x=I^n$ for all $n\geq k$. A set of elements $x_1,...,x_d$ is a superficial sequence of $I$ if $x_i$ is a superficial element of $I/{(x_1,...,x_{i-1})}$ for all $i=1,...,d$. A superficial sequence  $x_1,...,x_d$ of $I$ is called tame if  $x_i$ is a superficial element of $I$, for all $i=1,...,d$. Elias [\ref{E1}] defined and proved the tame superficial sequence exists (see also [\ref{Drt}]). Swanson [\ref{S}] proved that if $x_1,...,x_d$ is a superficial sequence of $I$, then $J=(x_1,...,x_d)$ is a minimal reduction of $I$. It is known that every minimal reduction can be generated by superficial sequence (see [\ref{Sa}] or [\ref{Drt}]).

The Ratliff-Rush closure of $I$ is defined as the ideal $$\widetilde{I}=\cup_{n\geq 1}(I^{n+1}:I^n).$$ It is a refinement of the integral closure of $I$ and $\widetilde{I}=I$ if $I$ is integrally closed (see [\ref{Rr}]). The Ratliff-Rush filtration $\widetilde{I^n}$, $n\in\mathbb{N}_0$, carries important information on the associated graded ring $G(I)=\bigoplus_{n\geq 0}{I^n}/{I^{n+1}}$. For example, Heinzer, Lantz and Shah [\ref{Hls}] showed that the $\depth G(I)\geq 1$  if and only if $\widetilde{I^n}=I^n$ for all $n\in\mathbb{N}_0$.
The aim of this paper is to compute the Ratliff-Rush closure in some senses and as an application, we shall reprove some main results of [\ref{Cpv}], [\ref{Gr}] and [\ref{G}]. Finally, we reprove Theorem 1 of [\ref{P1}] and Theorem 1.6 of [\ref{Ah}] with a much easier proof, and we also give a counter example for Question 3 of [\ref{P1}].  This example also says that Theorem 1.8 of [\ref{Ah}] does not hold in general. For any unexplained notation or terminology, we refer the reader to [\ref{Bh}] and [\ref{Hs}].

\section{ Ratliff-Rush closure, associated graded ring}

\begin{proposition} Let $d=2$, $x_1,x_2$ be a superficial sequence of $I$ and $J=(x_1,x_2)$. Let $k\in\mathbb{N}_0$ such that $J\cap I^n=JI^{n-1}$ for all $n\geq k+1$. Then $\widetilde{I^n}=I^n$ for all $n\geq 1$ if and only if $I^n:x_1=I^{n-1}$ for $n=1,...,k$.
\end{proposition}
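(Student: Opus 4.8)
The plan is to reduce both directions to the single numerical condition
\[
I^n:x_1=I^{n-1}\qquad\text{for all } n\geq 1. \quad(\ast)
\]
First I would set up the standard dictionary. Since $R$ is Cohen--Macaulay of dimension $2$ and $J=(x_1,x_2)$ is $\fm$-primary, $x_1,x_2$ is a regular sequence. Writing $x_1^{*}$ for the image of $x_1$ in $I/I^2=G(I)_1$, one checks directly that $x_1^{*}$ is a nonzerodivisor on $G(I)$ exactly when $(\ast)$ holds: the kernel of multiplication by $x_1^{*}$ on $G(I)_{n-1}$ is $\bigl((I^{n+1}:x_1)\cap I^{n-1}\bigr)/I^n$, and a short argument using that the $I$-adic filtration is separated shows these kernels all vanish iff $(\ast)$ holds. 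Because $x_1$ is superficial and the residue field is infinite, $\depth G(I)\geq 1$ is equivalent to $x_1^{*}$ being a nonzerodivisor; combining this with the Heinzer--Lantz--Shah criterion quoted in the introduction yields
\[
\widetilde{I^n}=I^n\ \text{for all } n \iff \depth G(I)\geq 1 \iff (\ast).
\]

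Granting this dictionary, the forward implication is immediate: if $\widetilde{I^n}=I^n$ for all $n$ then $(\ast)$ holds, so in particular $I^n:x_1=I^{n-1}$ for $n=1,\dots,k$. For the converse I would establish $(\ast)$ by induction on $n$. The inclusion $I^{n-1}\subseteq I^n:x_1$ is clear since $x_1\in I$, so only $I^n:x_1\subseteq I^{n-1}$ needs proof. The case $n=1$ is automatic because $(x_1)\subseteq I$, and the cases $2\leq n\leq k$ are given by hypothesis. For $n\geq k+1$, take $a$ with $ax_1\in I^n$. Then $ax_1\in J\cap I^n=JI^{n-1}$ by the hypothesis on $J$, so $ax_1=bx_1+cx_2$ with $b,c\in I^{n-1}$. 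Hence $(a-b)x_1=cx_2$, and since $x_1,x_2$ is a regular sequence there is $t\in R$ with $a-b=tx_2$ and $c=tx_1$. From $tx_1=c\in I^{n-1}$ we get $t\in I^{n-1}:x_1=I^{n-2}$ by the inductive hypothesis, whence $a-b=tx_2\in I^{n-1}$ and thus $a\in I^{n-1}$.

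The only delicate point is bookkeeping in the induction: the step for $n\geq k+1$ invokes $I^{n-1}:x_1=I^{n-2}$, and for the initial value $n=k+1$ this is precisely the case $n=k$ of the hypothesis when $k\geq 1$, and the already-settled case $n=1$ when $k=0$. The substantive content, and the step I expect to be the main obstacle to arrange cleanly, is exactly this inductive step: converting the Valabrega--Valla type condition $J\cap I^n=JI^{n-1}$ into control of the colon ideal $I^n:x_1$ by means of the Koszul relation coming from the regular sequence $x_1,x_2$. Everything else is either a direct inclusion or an application of the dictionary set up in the first paragraph.
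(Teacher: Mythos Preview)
Your proof is correct and follows essentially the same route as the paper: both directions are reduced to the condition $I^n:x_1=I^{n-1}$ for all $n\ge1$ (which the paper simply cites from \cite{P}, Corollary~2.7, while you spell out the dictionary via $\depth G(I)\ge1$ and Heinzer--Lantz--Shah), and the converse is proved by the same induction, using $J\cap I^n=JI^{n-1}$ together with the Koszul relation for the regular sequence $x_1,x_2$ to pass from $I^{n-1}:x_1=I^{n-2}$ to $I^{n}:x_1=I^{n-1}$. The only cosmetic difference is that the paper packages the inductive step as the computation $JI^{n-1}:x_1=I^{n-1}$, whereas you argue directly on elements.
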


\begin{proof} $(\Longrightarrow)$ immediately follows by [\ref{P}, Corollary 2.7].\\
 $(\Longleftarrow)$. By [\ref{P}, Corollary 2.7], it is enough for us to prove
 $I^n:x_1=I^{n-1}$ for all $n\geq k$. By using induction on $n$, it is enough to prove the result for $n=k+1$. For this, firstly we prove that $JI^k:x_1=I^k$. But this is an elementary fact that $JI^k:x_1=(x_1I^k+x_2I^k):x_1=I^k+(x_2I^k:x_1)$ and also $x_2I^k:x_1=x_2I^{k-1}$. Hence $JI^k:x_1=I^k$. Therefore, by our assumption, we have $(J\cap I^{k+1}):x_1=I^k$ and so  we have $I^{k+1}:x_1=I^k$, as desired.
\end{proof}

The following result immediately follows by Proposition 2.1.

\begin{corollary} Let $d=2$, $x_1,x_2$ be a superficial sequence of $I$ and $J=(x_1,x_2)$. Let $k\in\mathbb{N}_0$ such that $r_J(I)=k$. Then $\widetilde{I^n}=I^n$ for all $n\geq 1$ if and only if $I^n:x_1=I^{n-1}$ for $n=1,...,k$.
\end{corollary}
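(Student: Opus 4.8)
The plan is to reduce the corollary to Proposition 2.1 by showing that the reduction-number hypothesis $r_J(I)=k$ is a special case of the intersection hypothesis $J\cap I^n=JI^{n-1}$ for all $n\geq k+1$ appearing there. Since Proposition 2.1 already delivers exactly the stated biconditional under that intersection hypothesis, the entire task is to verify that $r_J(I)=k$ forces $J\cap I^n=JI^{n-1}$ for every $n\geq k+1$; once this is in hand the corollary is immediate.

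First I would unwind the definition of the reduction number. Saying $r_J(I)=k$ means that $k$ is the least integer with $I^{k+1}=JI^k$. Multiplying this identity by $I$ and iterating gives $I^{n+1}=JI^n$ for all $n\geq k$, equivalently $I^n=JI^{n-1}$ for all $n\geq k+1$. This step is the only substantive (and still entirely routine) point: stability of the equality $I^{n+1}=JI^n$ under passing to higher powers.

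Next I would record the elementary containment $JI^{n-1}\subseteq J$, which holds trivially since $JI^{n-1}$ is a multiple of $J$. Combining this with the identity from the previous step, for each $n\geq k+1$ we obtain
$$J\cap I^n=J\cap JI^{n-1}=JI^{n-1}.$$
Thus the hypothesis of Proposition 2.1 holds with the very same value of $k$.

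Finally I would invoke Proposition 2.1 verbatim: under the now-verified condition $J\cap I^n=JI^{n-1}$ for all $n\geq k+1$, we have $\widetilde{I^n}=I^n$ for all $n\geq 1$ if and only if $I^n:x_1=I^{n-1}$ for $n=1,\dots,k$, which is precisely the assertion of the corollary. I do not expect any genuine obstacle here, since all the real work is carried out in Proposition 2.1; the only thing to watch is that the reduction number $k$ matches the index $k$ in the proposition, which it does by construction.
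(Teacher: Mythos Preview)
Your proposal is correct and follows exactly the paper's approach: the paper simply states that the corollary ``immediately follows by Proposition 2.1,'' and you have spelled out the trivial verification that $r_J(I)=k$ gives $I^n=JI^{n-1}$ for all $n\geq k+1$, whence $J\cap I^n=JI^{n-1}$, so Proposition 2.1 applies.
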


\begin{corollary} Let $d=2$, $x_1,x_2$ be a superficial sequence of $I$ and $J=(x_1,x_2)$ such that $r_J(I)=2$. Then $\widetilde{I^n}=I^n$ for all $n\geq 1$ if and only if $I^2:x_1=I$.
\end{corollary}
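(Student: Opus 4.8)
The plan is to deduce the statement directly from Corollary 2.2 by specializing to the case $k=2$. Under the standing hypothesis $r_J(I)=2$, Corollary 2.2 asserts that $\widetilde{I^n}=I^n$ for all $n\geq 1$ if and only if the equalities $I^n:x_1=I^{n-1}$ hold for $n=1,2$. Thus the whole task reduces to inspecting just these two instances and showing that only the $n=2$ instance carries content.

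First I would dispose of the case $n=1$, where the required equality reads $I:x_1=I^0=R$. The point to observe is that this is automatic: since $x_1\in I$, we have $rx_1\in Rx_1\subseteq I$ for every $r\in R$, so $R\subseteq I:x_1$, and the reverse inclusion $I:x_1\subseteq R$ is trivial. Hence the $n=1$ condition of Corollary 2.2 imposes no restriction at all and may be discarded.

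The case $n=2$ then reads $I^2:x_1=I^1=I$, which is exactly the hypothesis $I^2:x_1=I$ appearing in the present statement. Combining the two observations, the biconditional furnished by Corollary 2.2 with $k=2$ collapses to: $\widetilde{I^n}=I^n$ for all $n\geq 1$ if and only if $I^2:x_1=I$, which is precisely the desired conclusion.

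I do not anticipate a genuine obstacle here, since the argument is essentially a quotation of Corollary 2.2 at $k=2$. The only step demanding a moment's care is the verification that the $n=1$ instance is vacuous, and this rests solely on the containment $x_1\in I$, which is part of the data ($x_1$ being a member of the superficial sequence generating $J\subseteq I$).
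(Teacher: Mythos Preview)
Your proposal is correct and matches the paper's approach: the paper states Corollary 2.3 without proof as an immediate consequence of Corollary 2.2, and your argument---specializing Corollary 2.2 to $k=2$ and observing that the $n=1$ condition $I:x_1=R$ is automatic because $x_1\in I$---is exactly the intended deduction.
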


The Hilbert-Samuel function of $I$ is the numerical function that measures the growth of the length of $R/I^n$ for all $n\in\mathbb{N}$. For all $n$ large this function $\lambda(R/I^n)$ is a polynomial in $n$ of degree $d$ $$\lambda(R/I^n)=\sum_{i=0}^d(-1)^ie_i(I){{n+d-i-1}\choose{d-i}},$$ where $e_0(I),e_1(I),...,e_d(I)$ are called the Hilbert coefficients of $I$.
Let $A=\bigoplus_{m\geq 0}A_m$ be a Notherian graded ring where $A_0$ is an Artinian local ring, $A$ is generated by $A_1$ over $A_0$ and $A_{+}=\bigoplus_{m>0}A_m$. Let $H_{A_{+}}^i(A)$ denote the i-th local cohomology module of $A$ with respect to the graded ideal $A_+$ and set $a_i(A)=\max\{m\vert\ \  [H_{A_{+}}^i(A)]_m\neq 0\}$ with the convention $a_i(A)=-\infty$, if $H_{A_{+}}^i(A)=0$. The Castelnuovo-Mumford regularity is defined by $\reg (A):=\max\{a_i(A)+i\vert\ \ i\geq 0\}$

\begin{proposition}
 Let $d=2$ and $J$ be a minimal reduction of $I$ such that $r_J(I)=2$. If $\widetilde{I}=I$, then we have the following:
\begin{itemize}
\item[(i)]
 $\reg{ G(I)}=2$.
\item[(ii)] $e_2(I)=\lambda(I^2/{JI})$.
\end{itemize}
\end{proposition}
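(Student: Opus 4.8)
The plan is to reduce both parts to a single structural fact: the hypotheses force $\depth G(I)\ge 1$. Since $r_J(I)=2$ we have $I^n=JI^{n-1}$ for all $n\ge 3$, and as $JI^{n-1}\subseteq J$ this gives $J\cap I^n=I^n=JI^{n-1}$ for every $n\ge 3$. Thus, taking $k=1$, the conditions $\widetilde{I^1}=I^1$ and $J\cap I^n=JI^{n-1}$ for all $n\ge k+2=3$ are exactly the hypotheses of the paper's main theorem, whence $\widetilde{I^n}=I^n$ for all $n\ge 1$ (this is the equivalence recorded in the abstract). By the Heinzer--Lantz--Shah criterion [\ref{Hls}] we conclude $\depth G(I)\ge 1=d-1$.

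For (i), I would invoke the well-known fact (due to Trung and Marley) that $\reg G(I)=r_J(I)$ whenever $\depth G(I)\ge d-1$; since $r_J(I)=2$ this gives $\reg G(I)=2$. To make this more self-contained, pick a superficial element $x_1$ whose leading form $x_1^{*}$ is a nonzerodivisor on $G(I)$ (available because $\depth G(I)\ge 1$). As $x_1^{*}$ is a regular linear form, $\reg G(I)=\reg G(\bar I)$ for $\bar I=I/(x_1)$, reducing the question to the one-dimensional ring $G(\bar I)\cong G(I)/x_1^{*}G(I)$; killing the image of the principal reduction $\bar J=(\bar y)$ there produces an Artinian graded algebra supported in degrees $\le 2$ with nonzero top piece (this is exactly $\bar I^2\ne\bar y\bar I$ together with $\bar I^3=\bar y\bar I^2$), forcing $\reg=2$. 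The delicate case is $\depth G(I)=1$, where the second superficial element fails to be regular on $G(\bar I)$ and one must instead control $[H^1_{G_+}(G)]_n$ for $n\ge 2$ by hand; it is precisely this bookkeeping that the $\depth\ge d-1$ regularity theorem packages, so I would lean on that result rather than redo the local-cohomology estimate.

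For (ii), I would use the length formula for the second Hilbert coefficient valid under $\depth G(I)\ge d-1$ (Guerrieri--Rossi), namely $e_2(I)=\sum_{n\ge 1}(n-1)\lambda(I^n/JI^{n-1})$. Because $r_J(I)=2$ gives $I^n=JI^{n-1}$, hence $\lambda(I^n/JI^{n-1})=0$, for all $n\ge 3$, and because the $n=1$ summand is annihilated by the factor $(n-1)$, only the $n=2$ term survives and $e_2(I)=\lambda(I^2/JI)$, as claimed. An alternative route avoiding this citation is to compute the Hilbert series directly: the reduction modulo $x_1$ preserves the $h$-polynomial, and in $G(\bar I)$ the Hilbert function is $\lambda(\bar R/\bar I),\ \lambda(\bar I/\bar I^2),\ e_0,\ e_0,\dots$, the stabilization at $n=2$ again coming from $r_J(I)=2$. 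This yields $h(t)=h_0+h_1t+h_2t^2$ with $h_2=e_0-\lambda(\bar I/\bar I^2)$ and $e_2(I)=h_2$, after which the desired equality reduces to the length identity $e_0-\lambda(\bar I/\bar I^2)=\lambda(I^2/JI)$, a routine computation using that $x_1,x_2$ is a regular sequence. In both parts the one genuine obstacle is the possibility that $G(I)$ is not Cohen--Macaulay (depth exactly $1$): the clean statements $\reg G(I)=r_J(I)$ and $e_2(I)=\sum_{n}(n-1)\lambda(I^n/JI^{n-1})$ are exactly the refinements of the Cohen--Macaulay formulas that persist at the borderline $\depth G(I)=d-1$, and confirming that borderline is where the real work lies.
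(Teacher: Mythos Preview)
Your proposal is correct and follows the same strategy as the paper: first deduce $\depth G(I)\ge 1$ from $\widetilde{I}=I$ and $r_J(I)=2$, then read off (i) from Marley's identification $\reg G(I)=r_J(I)$ and (ii) from the depth-$(d-1)$ formula for $e_2$ (the paper cites Corso--Polini--Rossi [\ref{Cpr}] rather than Guerrieri--Rossi, but the content is the same). One small citation fix: the ``main theorem'' you invoke (Theorem~2.13) is stated only for $d\ge 3$, whereas here $d=2$; the paper instead uses Corollary~2.3, obtaining $I^2:x_1\subseteq \widetilde{I^2}:x_1=\widetilde{I}=I$ from the superficial-element property and then concluding $\widetilde{I^n}=I^n$ for all $n$.
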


\begin{proof}
The case $(i)$ follows by Corollary 2.3 and [\ref{M}, Theorem 2.1 and Corollay 2.2] and the case $(ii)$ follows by Corollary 2.3 and [\ref{Cpr}, Theorem 3.1].

\end{proof}

\begin{remark}
 Let $d=2$, $\widetilde{I}=I$ and $J$ be a minimal reduction of $I$. If $\reg{ G(I)}=3$, then by [\ref{M}, Lemma 1.2 and Corollary 2.2], [\ref{T}, Proposition 3.2] and Proposition 2.4 we have $r_J(I)=3$.
\end{remark}

The following result is an improvement of [\ref{H}, Theorem 2.11] and [\ref{I}, Proposition 16].
\begin{proposition} Let $d=2$,  $\widetilde{I}=I$ and $J$ be a minimal reduction of $I$. Then $r_J(I)=2$ if and only if $P_I(n)=H_I(n)$ for $n=1,2$, where $H_I(n)$ and $P_I(n)$ are
the Hilbert-Samuel function and  the Hilbert-Samuel polynomial respectively.
\end{proposition}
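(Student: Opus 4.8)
The plan is to treat the two implications separately. For the forward direction, assume $r_J(I)=2$. Since $\widetilde{I}=I$, Proposition 2.4(i) gives $\reg G(I)=2$. I would then invoke the standard comparison between the Castelnuovo--Mumford regularity of $G(I)$ and the postulation number of the Hilbert--Samuel function (see, e.g., [\ref{T}]): for an $\fm$-primary ideal in a $d$-dimensional Cohen--Macaulay local ring one has $H_I(n)=P_I(n)$ for every $n\geq \reg G(I)-d+1$. With $d=2$ and $\reg G(I)=2$ this already yields $H_I(n)=P_I(n)$ for all $n\geq 1$, in particular for $n=1,2$, which is what is wanted.

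For the backward direction I would pass to dimension one. Choose a superficial sequence $x_1,x_2$ with $J=(x_1,x_2)$, put $\bar R=R/(x_1)$, a one-dimensional Cohen--Macaulay ring, and let $\bar I$ and $\bar J=(\bar x_2)$ be the images of $I$ and $J$; since reduction by a superficial element preserves the first two Hilbert coefficients, $P_{\bar I}(n)=e_0 n-e_1$. For each $n\geq 1$ multiplication by $x_1$ yields the exact sequence
\[
0\lo (I^n:x_1)/I^{n-1}\lo R/I^{n-1}\overset{x_1}{\lo} R/I^n\lo \bar R/\bar I^{\,n}\lo 0 ,
\]
whence, writing $D(n)=H_I(n)-P_I(n)$, $\bar D(n)=H_{\bar I}(n)-P_{\bar I}(n)$ and $u_n=\lambda\big((I^n:x_1)/I^{n-1}\big)\geq 0$, one obtains the recursion $\bar D(n)=D(n)-D(n-1)+u_n$. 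As $x_1\in I$ forces $I:x_1=R$ we have $u_1=0$, while $D(0)=-e_2$; feeding in the hypotheses $D(1)=D(2)=0$ gives $\bar D(1)=e_2$ and $\bar D(2)=u_2$. In the one-dimensional ring $\bar R$ the deviation $\bar D$ is non-negative, non-increasing, and vanishes exactly for $n\geq r_{\bar J}(\bar I)$, so $\bar D(2)=u_2$ measures precisely how far $\bar I$ is from reduction number at most two.

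The crux is to convert these length data into the ideal equality $I^3=JI^2$. The difficulty is genuine: $\bar D(2)=0$ only bounds $r_{\bar J}(\bar I)$, and the inequality $r_{\bar J}(\bar I)\leq r_J(I)$ runs the wrong way, so to climb back from $\bar R$ to $R$ one must control the colon modules $(I^n:x_1)/I^{n-1}$, and having only $\widetilde{I}=I$ rather than $\widetilde{I^n}=I^n$ for all $n$ is exactly what makes this delicate. My plan is to use the agreement at $n=1,2$ together with Proposition 2.1 and [\ref{P}, Corollary 2.7] to upgrade $\widetilde{I}=I$ to $\widetilde{I^n}=I^n$ for all $n$ (equivalently $I^n:x_1=I^{n-1}$ for all $n$, so that every $u_n=0$ and $r_{\bar J}(\bar I)=r_J(I)$); granting this, $\bar D(2)=u_2=0$ forces $r_J(I)\leq 2$. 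This promotion step is the main obstacle I anticipate. To reach the stated equality one must still exclude $r_J(I)\leq 1$: in those cases $e_2=0$, whereas $r_J(I)=2$ forces $I^2\neq JI$ and hence $e_2>0$, so the two regimes are separated by whether $H_I$ and $P_I$ also agree at $n=0$; the statement should accordingly be read with the degenerate cases set aside.
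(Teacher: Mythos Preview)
Your forward direction is fine and close to the paper's: the paper invokes Corollary~2.3 to obtain $\widetilde{I^n}=I^n$ for all $n$ and then quotes [\ref{I}, Proposition~16], while you route through Proposition~2.4(i) and the regularity bound on the postulation number, which comes to the same thing.

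For the backward direction the paper does nothing beyond citing [\ref{I}, Proposition~16], so your reduction to dimension one is a genuinely different, more self-contained approach. The gap you flag, however, is a red herring. You do not need to promote $\widetilde{I}=I$ to $\widetilde{I^n}=I^n$ for all $n$; you only need $u_2=0$, and this is immediate from the hypothesis. For a superficial element $x_1$ one has $\widetilde{I^{n}}:x_1=\widetilde{I^{n-1}}$ for all $n\geq 1$ (see [\ref{Ma}, Lemma~2.7] with $m=0$, as used in the proof of Theorem~2.13), so
\[
I^2:x_1\ \subseteq\ \widetilde{I^2}:x_1\ =\ \widetilde{I}\ =\ I,
\]
giving $u_2=0$ directly. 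Your recursion then yields $\bar D(2)=0$, hence $r_{\bar J}(\bar I)\le 2$; and since you now know $I^2:x_1=I$, Remark~2.7(ii) lifts this to $r_J(I)\le 2$ without invoking the wrong-way inequality. Thus your outline becomes a complete argument once this single observation is inserted, and the circular-looking appeal to Proposition~2.1 can be dropped entirely.

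Your closing caveat about $r_J(I)\le 1$ is well taken: both the paper's citation and your argument produce only $r_J(I)\le 2$, so the equality in the statement should be read with the degenerate cases set aside.
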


\begin{proof} $(\Longrightarrow)$ let $r_J(I)=2$. Then by Corollary 2.3, $\widetilde{I^n}=I^n$ for all $n\geq 1$ and so by [\ref{I}, Proposition 16] we have $H_I(n)=P_I(n)$ for all $n=1,2$.\\
$(\Longleftarrow)$ is clear by [\ref{I}, Proposition 16].
\end{proof}

\begin{remark} Let $J$ be a minimal reduction of $I$, $x_1\in J$ and $\overline{I}=I/{(x_1)}$, $\overline{J}=J/{(x_1)}$. Then, by definition of reduction number, we have
\begin{itemize}\item[(i)]
If $r_{\overline{J}}(\overline{I})=k$ and $I^{k+1}:x_1=I^k$, then $r_J(I)=k$.
\item[(ii)] If $d=2$ and $I^2:x_1=I$, Then $r_{\overline{J}}(\overline{I})\leq 2$ if and only if $r_J(I)\leq 2$.
\end{itemize}
\end{remark}

\begin{lemma} Let $d=2$ and $J$ be a minimal reduction of $I$ such that $J\cap I^n=JI^{n-1}$ for $n=1,...,t$. If $r_{\overline{J}}(\overline{I})=k$ and $\lambda(I^{n+1}/{JI^n})=\lambda({\overline{I}}^{n+1}/{{\overline{J}}{\overline{I}}^n})$ for $n=t,...,k-1$. Then $I^{n+1}:x_1=I^n$ for $n=0,...,k-1$.
\end{lemma}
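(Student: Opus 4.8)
The plan is to compare, for each $n$, the length of $I^{n+1}/JI^n$ over $R$ with the length of $\overline I^{n+1}/\overline J\,\overline I^n$ over $\overline R:=R/(x_1)$, and to extract from the hypotheses the equality $I^{n+1}:x_1=JI^n:x_1$ in the range $n=0,\dots,k-1$. Since $x_1,x_2$ is a superficial sequence generating the minimal reduction $J$ of an $\fm$-primary ideal in a two-dimensional Cohen--Macaulay ring, the pair $x_1,x_2$ is a regular sequence; in particular $x_1$ is a nonzerodivisor, which is exactly what makes the length bookkeeping exact. The assumption $r_{\overline J}(\overline I)=k$ serves to fix the range of indices under consideration.

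First I would set up the key short exact sequence. Reduction modulo $x_1$ gives a natural surjection $I^{n+1}/JI^n \twoheadrightarrow \overline I^{n+1}/\overline J\,\overline I^n$, using $\overline I^{n+1}=(I^{n+1}+(x_1))/(x_1)$ and $\overline J\,\overline I^n=(JI^n+(x_1))/(x_1)$. Its kernel is $(I^{n+1}\cap(JI^n+(x_1)))/JI^n$, which by the modular law (valid because $JI^n\subseteq I^{n+1}$) equals $(JI^n+(I^{n+1}\cap(x_1)))/JI^n\cong (I^{n+1}\cap(x_1))/(JI^n\cap(x_1))$; since $x_1$ is a nonzerodivisor, multiplication by $x_1$ identifies this with $(I^{n+1}:x_1)/(JI^n:x_1)$. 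Hence
\[
\lambda(I^{n+1}/JI^n)=\lambda\big((I^{n+1}:x_1)/(JI^n:x_1)\big)+\lambda(\overline I^{n+1}/\overline J\,\overline I^n).
\]
For $n=t,\dots,k-1$ the length hypothesis forces the middle term to vanish, giving $I^{n+1}:x_1=JI^n:x_1$. For $n=0,\dots,t-1$ I would instead use $J\cap I^{n+1}=JI^n$ together with $J:x_1=R$ (valid since $x_1\in J$) and the identity $(J\cap I^{n+1}):x_1=(J:x_1)\cap(I^{n+1}:x_1)$ to obtain the same equality $I^{n+1}:x_1=JI^n:x_1$. These two mechanisms together cover all $n=0,\dots,k-1$.

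Next I would compute $JI^n:x_1$ explicitly. Writing $JI^n=x_1I^n+x_2I^n$ gives $JI^n:x_1=I^n+(x_2I^n:x_1)$, and because $x_1,x_2$ is a regular sequence a relation $x_1y=x_2z$ with $z\in I^n$ forces $y=x_2w,\ z=x_1w$ for some $w$, whence $x_2I^n:x_1=x_2(I^n:x_1)$. Therefore $I^{n+1}:x_1=I^n+x_2(I^n:x_1)$ for all $n=0,\dots,k-1$. Finally I would run an induction on $n$: the case $n=0$ is immediate since $x_1\in I$ gives $I:x_1=R=I^0$, and assuming $I^n:x_1=I^{n-1}$ the displayed identity yields $I^{n+1}:x_1=I^n+x_2I^{n-1}=I^n$, because $x_2\in I$ forces $x_2I^{n-1}\subseteq I^n$. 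This establishes $I^{n+1}:x_1=I^n$ for $n=0,\dots,k-1$, as required.

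I expect the main obstacle to be the first step: correctly identifying the kernel of the reduction map as $(I^{n+1}:x_1)/(JI^n:x_1)$ and keeping the additivity of lengths exact. Once that additivity is in place, the regular-sequence computation of $JI^n:x_1$ and the concluding induction are routine.
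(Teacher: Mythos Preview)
Your proof is correct and follows essentially the same route as the paper: the same short exact sequence $0\to (I^{n+1}:x_1)/(JI^n:x_1)\to I^{n+1}/JI^n\to \overline I^{n+1}/\overline J\,\overline I^n\to 0$, the same regular-sequence computation of $JI^n:x_1$, and the same induction. The only noteworthy difference is in the low range $n=0,\dots,t-1$: the paper cites Elias to deduce $(x_1)\cap I^n=x_1I^{n-1}$ directly from $J\cap I^n=JI^{n-1}$, whereas you obtain $I^{n+1}:x_1=JI^n:x_1$ by the elementary identity $(J\cap I^{n+1}):x_1=(J:x_1)\cap(I^{n+1}:x_1)=I^{n+1}:x_1$, which makes your argument self-contained and slightly more uniform across the two ranges.
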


\begin{proof} By [\ref{E}, Proposition 1.7(ii)], $(x_1)\cap I^n=x_1I^{n-1}$ for $n=1,...,t$ and so $I^n:x_1=I^{n-1}$ for $n=1,...,t$. Now, consider the exact sequence $$0\longrightarrow{I^{n+1}:x_1}/{JI^n:x_1}\longrightarrow I^{n+1}/{JI^n}\longrightarrow{\overline{I}}^{n+1}/{{\overline{J}}{\overline{I}}^n}\longrightarrow 0.\ \ (\dagger)$$
By our assumption, $I^{n+1}:x_1=JI^n:x_1$ for $n=t,...,k-1$. Assume that $yx_1\in JI^{t}$. Then we have $yx_1=\alpha_1x_1+\alpha_2x_2$ for some $\alpha_1,\alpha_2\in I^t$. Hence $(y-\alpha_1)x_1=\alpha_2x_2\in x_2I^t$ and since $x_1,x_2$ is a regular sequence, we obtain $y-\alpha_1=sx_2$ for some $s\in R$. Since $(y-\alpha_1)x_1=sx_1x_2\in x_2I^t$ and $x_2$ is a non-zerodivisor, it follows that $sx_1\in I^t$ and so $s\in I^t:x_1$. Therefore $s\in I^{t-1}$ and so $y\in I^t$. Thus by repeating this argument, we obtain $I^{n+1}:x_1=I^n$ for $n=0,...,k-1$, as desired.
\end{proof}

The following result was proved in [\ref{Hu}, Theorem 2.4], [\ref{Cpv}, Theorem 3.10] and [\ref{R}, Theorem 3.7], and we give a simplified proof.
\begin{proposition} Let $J$ be a minimal reduction of $I$ such that $J\cap I^n=JI^{n-1}$ for $n=1,...,t$ and $\lambda(I^{t+1}/{JI^t})\leq 1$. Then
$\depth G(I)\geq{d-1}$.

\end{proposition}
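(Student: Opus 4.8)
The plan is to argue by induction on $d$, cutting down by one superficial element at each step and transporting the depth estimate upward. The base case $d=1$ is vacuous, since $\depth G(I)\ge 0$ always. For the inductive step assume $d\ge 2$, let $x_1,\dots,x_d$ be a superficial sequence with $J=(x_1,\dots,x_d)$, and pass to $\overline R=R/(x_1)$, with $\overline I=I\overline R$ and $\overline J=J\overline R$, a Cohen--Macaulay ring of dimension $d-1$ whose minimal reduction $\overline J$ is again generated by a superficial sequence. First I would verify that both hypotheses descend. From $J\cap I^n=JI^{n-1}$ for $n\le t$, [\ref{E}, Proposition 1.7(ii)] gives $(x_1)\cap I^n=x_1I^{n-1}$ for $n\le t$, and an elementary computation (lift an element of $\overline J\cap\overline I^n$ to $J$, subtract a multiple of $x_1$, and invoke $J\cap I^n=JI^{n-1}$) then yields $\overline J\cap\overline I^n=\overline J\,\overline I^{n-1}$ for $n\le t$. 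Moreover the natural surjection $I^{t+1}/JI^t\twoheadrightarrow\overline I^{t+1}/\overline J\,\overline I^t$ forces $\lambda(\overline I^{t+1}/\overline J\,\overline I^t)\le\lambda(I^{t+1}/JI^t)\le 1$. Hence the inductive hypothesis applies to $\overline I$ and gives $\depth G(\overline I)\ge d-2$.

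To raise this to $\depth G(I)\ge d-1$ I would use that, once $x_1$ is chosen so that its initial form $x_1^{\ast}$ is a nonzerodivisor on $G(I)$, one has $G(\overline I)\cong G(I)/x_1^{\ast}G(I)$ and therefore $\depth G(I)=\depth G(\overline I)+1$. In the regime $d\ge 3$ this is automatic: the induction already provides $\depth G(\overline I)\ge d-2\ge 1$, so Sally's machine guarantees that $x_1^{\ast}$ is regular on $G(I)$ and the estimate lifts. The difficulty is thus entirely concentrated in the passage from dimension one to dimension two, that is, in the single statement $\depth G(I)\ge 1$; by [\ref{Hls}] this is equivalent to $\widetilde{I^n}=I^n$ for all $n$, and by choosing $x_1$ appropriately it amounts to showing $I^{n+1}:x_1=I^n$ for every $n$.

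This last step is the one I expect to be the main obstacle, and it is where the hypothesis $\lambda(I^{t+1}/JI^t)\le 1$ is genuinely used. If the length is $0$ then $I^{t+1}=JI^t$, whence $I^n=JI^{n-1}$ and so $J\cap I^n=JI^{n-1}$ for all $n\ge t+1$; combined with the given range $n\le t$ this is the full Valabrega--Valla condition, so $G(I)$ is Cohen--Macaulay and $\depth G(I)=d$. The delicate case is length exactly $1$. Here I would work in the already-reduced dimension two and run the computation of Lemma 2.8: the regular sequence $x_1,x_2$ yields $JI^n:x_1=I^n$ together with the exact sequences $(\dagger)$, and I would show that a single unit of colength cannot propagate, so that $I^{n+1}:x_1=I^n$ persists throughout the range $n\le r_J(I)$. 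Feeding this into Corollary 2.2 gives $\widetilde{I^n}=I^n$ for all $n$, hence $\depth G(I)\ge 1$, which closes the induction. The crux is precisely controlling the colength-one defect for the indices $n>t$, where the Valabrega--Valla identity is not available.
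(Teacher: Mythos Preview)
Your overall architecture coincides with the paper's: reduce by Sally's descent to $d=2$, then show $\depth G(I)\ge 1$ by verifying $I^{n+1}:x_1=I^n$ for all $n$ and invoking Corollary~2.2. You also correctly isolate the length-zero case via Valabrega--Valla. The gap is exactly where you yourself flag it: the sentence ``a single unit of colength cannot propagate, so that $I^{n+1}:x_1=I^n$ persists throughout the range $n\le r_J(I)$'' is an assertion, not an argument, and it is the whole content of the proposition in dimension two. Nothing in Lemma~2.8 or the exact sequence~$(\dagger)$ by itself bounds the range of $n$ for which the colon identity holds in terms of $r_J(I)$.

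The paper closes this gap with two ingredients you do not mention. First, it does \emph{not} work with $r_J(I)$ directly but with $k:=r_{\overline J}(\overline I)$, the reduction number in the one-dimensional quotient $\overline R=R/(x_1)$; Lemma~2.8 is applied with this $k$, yielding $I^{n+1}:x_1=I^n$ for $n=0,\dots,k-1$. Second, to connect $k$ back to $r_J(I)$, the paper invokes Huckaba's formula $e_1(I)=\sum_{n\ge 0}\lambda(\widetilde{I^{n+1}}/J\widetilde{I^n})$ and the equality $e_1(I)=e_1(\overline I)=\sum_{n=0}^{k-1}\lambda(I^{n+1}/JI^n)$, and then appeals to Rossi's bound \cite{R1} on the reduction number to conclude $r_J(I)\le k$. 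Only after this does Corollary~2.2 apply. So the missing idea is the passage through $e_1$ and the one-dimensional reduction number; without it the induction you sketch has no terminal index and the ``non-propagation'' claim is unsupported.
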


\begin{proof} By using Sally's descent, we may deduce the problem to the case of $d=2$. Set $r_{\overline{J}}(\overline{I})=k$. Then, by using the exact sequence $(\dagger)$, we have $\lambda({\overline{I}}^{n+1}/{{\overline{J}}{\overline{I}}^n})=\lambda(I^{n+1}/{JI^n})\leq 1$ for $n=t,...,k-1$. By Lemma 2.8, we have $I^{n+1}:x_1=I^n$ for $n=0,...,k-1$. By [\ref{Hu}, Proposition 1.1], we know that
$\sum_{n\geq 0}\lambda(\widetilde{I^{n+1}}/{J\widetilde{I^n}})=e_1(I)=e_1(\overline{I})=\sum_{n=0}^{k-1}
\lambda(I^{n+1}/{JI^n})=\sum_{n=0}^{t-1}\lambda(I^{n+1}/{JI^n})+k-t$. Therefore by [\ref{R1}, Theorem 1.3], we have $r_J(I)\leq k$. Thus by Lemma 2.8 and Corollary 2.2, we obtain $\widetilde{I^n}=I^n$ for all $n\geq 1$. Hence $\depth G(I)\geq 1$, as required.

\end{proof}

\begin{lemma} Let $d=2$ and $J=(x_1,x_2)$ a minimal reduction of $I$ such that $J\cap I^n=JI^{n-1}$ for all $n\geq 3$. If either $I^2:x_1=I$ or $I^2:x_2=I$, then $\widetilde{I^n}=I^n$ for all $n\geq 1$. In particular $\depth G(I)\geq 1$.
\end{lemma}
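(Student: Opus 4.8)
The plan is to reduce the whole statement to the single colon condition that already drives Proposition 2.1, exploiting that every hypothesis except the alternative ``$I^2:x_1=I$ or $I^2:x_2=I$'' is symmetric in $x_1$ and $x_2$. Since $J$ is an $\fm$-primary ideal generated by $d=2$ elements in a two-dimensional Cohen-Macaulay local ring, $x_1,x_2$ is a system of parameters and hence a regular sequence; in a Noetherian local ring regular sequences may be permuted, so $x_2,x_1$ is one as well. The intersection hypothesis $J\cap I^n=JI^{n-1}$ for $n\geq 3$ refers only to the ideal $J$ and is insensitive to which generator is called first. I would therefore begin by observing that it suffices to treat the case $I^2:x_1=I$, the case $I^2:x_2=I$ following verbatim after interchanging the labels of $x_1$ and $x_2$.

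Assuming $I^2:x_1=I$, the key step is to promote this to $I^n:x_1=I^{n-1}$ for all $n\geq 1$ by induction, reusing the elementary computation in the proof of Proposition 2.1. The cases $n=1,2$ are the hypothesis together with the trivial identity $I:x_1=R=I^0$. For the inductive step with $n\geq 2$ I would note that $rx_1\in I^{n+1}$ forces $rx_1\in J\cap I^{n+1}=JI^n$ (because $x_1\in J$), so that $I^{n+1}:x_1=(J\cap I^{n+1}):x_1=JI^n:x_1$; the regular-sequence identity $x_2I^n:x_1=x_2(I^n:x_1)$ then gives
$$JI^n:x_1=I^n+x_2(I^n:x_1)=I^n+x_2I^{n-1}=I^n$$
once the inductive hypothesis $I^n:x_1=I^{n-1}$ is invoked. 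This closes the induction and, in particular, exhibits $x_1$ as a superficial element of $I$, since $I^{n+1}:x_1=I^n$ holds for all $n\geq 0$.

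Having established $I^n:x_1=I^{n-1}$ for every $n$ with $x_1$ superficial, I would conclude $\widetilde{I^n}=I^n$ for all $n\geq 1$ directly from [\ref{P}, Corollary 2.7], which characterizes this vanishing of the Ratliff-Rush correction through precisely the colon condition on a superficial element. (Equivalently one may feed $I^2:x_1=I$ into Proposition 2.1 with $k=2$, after checking that $x_1,x_2$ is a superficial sequence.) The final assertion $\depth G(I)\geq 1$ then follows from the Heinzer-Lantz-Shah criterion [\ref{Hls}], which equates this depth bound with $\widetilde{I^n}=I^n$ at every level.

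I anticipate the only genuine subtlety to be the symmetry reduction of the first paragraph: one must verify that the colon computation uses nothing about $x_1$ beyond its being one member of the regular sequence generating $J$ and the symmetric intersection hypothesis, so that the identical argument applies with the roles of $x_1$ and $x_2$ exchanged. Once this is secured, the remainder is the same bookkeeping induction as in Proposition 2.1 together with two citations, and no further obstacle arises.
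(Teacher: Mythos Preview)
Your argument is correct and matches the paper's approach: the paper's proof is the single sentence ``By using the same argument that was used in the proof of Proposition 2.1, the result immediately follows,'' and you have spelled out exactly that argument---the induction $I^{n+1}:x_1=(J\cap I^{n+1}):x_1=JI^n:x_1=I^n+x_2(I^n:x_1)=I^n$ together with the citation of [\ref{P}, Corollary 2.7] and the Heinzer--Lantz--Shah criterion. Your added care about the symmetry reduction and about verifying that $x_1$ becomes superficial (since Lemma 2.10 assumes only that $J$ is a minimal reduction, not that $x_1,x_2$ is a superficial sequence) is appropriate and fills a gap the paper leaves implicit.
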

\begin{proof}
By using the same argument that was used in the proof of proposition 2.1, the result immediately follows.

\end{proof}

\begin{lemma}
 Let $d=2$ and $J=(x_1,x_2)$ be a minimal reduction of $I$ such that $\lambda(J\cap I^2 /JI)\leq 1$. Then either $I^2:x_1=I$ or $I^2:x_2=I$.
\end{lemma}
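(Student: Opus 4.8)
The plan is to bound the two colengths $\lambda((I^2:x_1)/I)$ and $\lambda((I^2:x_2)/I)$ simultaneously by $\lambda((J\cap I^2)/JI)\le 1$, and then observe that two non-negative integers whose sum is at most $1$ cannot both be positive; hence one of them vanishes, which is exactly one of the asserted equalities $I^2:x_1=I$ or $I^2:x_2=I$. Note first that $I\subseteq I^2:x_i$ always holds, since $x_i\in I$, so these colengths are well defined (they are submodules of the finite length module $R/I$), and the equality $I^2:x_i=I$ is equivalent to the vanishing of the corresponding colength.

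To realize this bound I would introduce the single $R$-linear map
\[ \psi\colon \frac{I^2:x_1}{I}\oplus\frac{I^2:x_2}{I}\longrightarrow \frac{J\cap I^2}{JI},\qquad \psi(\overline a,\overline b)=\overline{ax_1+bx_2}. \]
First I would check this is well defined: if $a\in I^2:x_1$ and $b\in I^2:x_2$ then $ax_1,bx_2\in I^2$, so $ax_1+bx_2\in J\cap I^2$; and replacing $a$ by an element of $I$ (resp. $b$ by an element of $I$) alters $ax_1$ (resp. $bx_2$) by an element of $x_1I$ (resp. $x_2I$), hence does not change the class modulo $JI$.

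The key step, and the main obstacle, is to prove that $\psi$ is injective. Suppose $ax_1+bx_2\in JI=x_1I+x_2I$, say $ax_1+bx_2=x_1p+x_2q$ with $p,q\in I$. Then $(a-p)x_1=(q-b)x_2$. Here I would use that $x_1,x_2$ is a regular sequence: being a system of parameters in the two-dimensional Cohen-Macaulay ring $R$ it is regular, and permutations of a regular sequence in a Noetherian local ring remain regular. From $(a-p)x_1\in(x_2)$ and $x_1$ being a nonzerodivisor modulo $x_2$ I obtain $a-p=x_2 s$ for some $s\in R$, and cancelling $x_2$ gives $b=q-sx_1$. The crucial point is now that $x_2\in I$ and $x_1\in I$, so $a=p+x_2s\in I$ and $b=q-sx_1\in I$; thus $(\overline a,\overline b)=0$ and $\psi$ is injective. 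In effect, the Koszul syzygy $(x_2,-x_1)$ that would otherwise enlarge the kernel is harmless precisely because $x_1,x_2\in I$.

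Granting injectivity, I get $\lambda((I^2:x_1)/I)+\lambda((I^2:x_2)/I)=\lambda(\im\psi)\le \lambda((J\cap I^2)/JI)\le 1$, and the conclusion follows as explained above. I expect the only delicate point to be the regular-sequence cancellation in the injectivity argument; everything else is formal. One could instead phrase the same computation through the two separate multiplication maps $\overline a\mapsto\overline{ax_1}$ and $\overline b\mapsto\overline{bx_2}$, each of which is injective for the same reason, but the combined map $\psi$ makes the additivity of lengths transparent.
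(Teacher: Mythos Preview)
Your proof is correct. The injectivity argument is exactly right: the only syzygy on $(x_1,x_2)$ is the Koszul one, and since $x_1,x_2\in I$, that syzygy lands in $I\oplus I$, which is precisely what you need. The resulting inequality
\[
\lambda\bigl((I^2:x_1)/I\bigr)+\lambda\bigl((I^2:x_2)/I\bigr)\le\lambda\bigl((J\cap I^2)/JI\bigr)\le 1
\]
immediately forces one summand to vanish.

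Your route differs from the paper's. The paper argues by a case split on the length of $(J\cap I^2)/(JI+I^2\cap(x_1))$, which is either $0$ or $1$. If this length is $1$ then $I^2\cap(x_1)\subseteq JI$, and a regular-sequence computation (essentially your injectivity for the single factor $x_1$) yields $I^2\cap(x_1)=x_1I$, hence $I^2:x_1=I$. If it is $0$ then $J\cap I^2=I^2\cap(x_1)+x_2I$, and intersecting with $(x_2)$ gives $I^2\cap(x_2)=x_2I$, hence $I^2:x_2=I$. So the paper's proof is asymmetric in $x_1,x_2$ and proceeds by cases, whereas your single injective map $\psi$ treats both variables symmetrically and produces the sharper quantitative bound above in one stroke. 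Both arguments rest on the same underlying fact that $(x_1,x_2)$ is a regular sequence; your packaging is more conceptual and would generalize more readily (e.g., to bounding $\sum_i\lambda((I^2:x_i)/I)$ for larger $d$).
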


\begin{proof}
If $\lambda(J\cap I^2/{JI+I^2\cap(x_1)})=1$, then $I^2\cap(x_1)\subseteq JI$ and so $I^2\cap(x_1)\subseteq[x_1I+x_2I]\cap(x_1)$. Therefore $I^2\cap(x_1)={x_1}I$ and so $I^2:x_1=I$. If $\lambda(J\cap I^2/{JI+I^2\cap(x_1)})=0$, then $I^2\cap(x_1)+Ix_2=J\cap I^2$. Hence $I^2\cap(x_1x_2)+Ix_2=I^2\cap(x_2)$ and so $Ix_2=I^2\cap(x_2)$. Thus $I^2:x_2=I$.
\end{proof}

The following result was proved in [\ref{G}, Theorem 3.2] and [\ref{Gr}, Corollary 1.5] and we give an easier proof

\begin{proposition}
Let $J$ be a minimal reduction of $I$ such that $J\cap I^n=JI^{n-1}$ for all $n\geq 3$. If $\lambda(J\cap I^2/IJ)\leq 1$, then $\depth G(I)\geq{d-1}$.
\end{proposition}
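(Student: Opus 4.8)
The plan is to follow the same two-step strategy used in the proof of Proposition 2.9: first reduce to the case $d=2$ by Sally's descent, and then settle the two-dimensional case by combining Lemmas 2.10 and 2.11. Once we are in dimension two the conclusion is almost immediate, so essentially all of the work is packaged into the descent.

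First I would dispose of the case $d=2$. Write $J=(x_1,x_2)$ with $x_1,x_2$ a superficial sequence. The hypothesis $\lambda(J\cap I^2/IJ)\leq 1$ is precisely the hypothesis of Lemma 2.11, which yields that either $I^2:x_1=I$ or $I^2:x_2=I$. On the other hand, the hypothesis $J\cap I^n=JI^{n-1}$ for all $n\geq 3$ is exactly what Lemma 2.10 requires. Feeding these two facts into Lemma 2.10 gives $\widetilde{I^n}=I^n$ for all $n\geq 1$, and in particular $\depth G(I)\geq 1=d-1$.

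For general $d$ I would choose a superficial sequence $x_1,\ldots,x_d$ generating $J$, set $\bar R=R/(x_1,\ldots,x_{d-2})$, and pass to the two-dimensional Cohen--Macaulay ring $\bar R$ together with $\bar I$ and $\bar J$. The key point to check is that the two hypotheses descend, namely that $\bar J\cap\bar I^n=\bar J\bar I^{n-1}$ for all $n\geq 3$ and that $\lambda(\bar J\cap\bar I^2/\bar I\bar J)\leq 1$. The former is the stability of the intersection (Valabrega--Valla type) relations under reduction by a superficial element, and the latter follows because the relevant length does not increase upon passing to the quotient, by the analogue of the exact sequence $(\dagger)$ used in Lemma 2.8. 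Granting this, the two-dimensional case treated above gives $\depth G(\bar I)\geq 1$, and Sally's descent then lifts this to $\depth G(I)\geq (d-2)+1=d-1$, as desired.

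The main obstacle is exactly this descent of the hypotheses through the superficial quotient: one must verify that reduction modulo the first $d-2$ superficial elements preserves both the relations $\bar J\cap\bar I^n=\bar J\bar I^{n-1}$ for $n\geq 3$ and the smallness of the length $\lambda(\bar J\cap\bar I^2/\bar I\bar J)\leq 1$. This is where the care in selecting, and iterating over, the superficial sequence is concentrated; everything afterwards is a formal invocation of Lemmas 2.10 and 2.11 together with Sally's descent.
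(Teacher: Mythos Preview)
Your proposal is correct and follows essentially the same approach as the paper: reduce to $d=2$ via Sally's descent, then apply Lemma 2.11 followed by Lemma 2.10. The paper's proof is a two-line invocation of exactly these ingredients, whereas you are more explicit about the descent of the hypotheses to the quotient; this extra care is reasonable but does not constitute a different strategy.
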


\begin{proof}
By Sally's descent, we may assume that $d=2$. Now, by using Lemmas 2.11 and 2.10 the result follows.
\end{proof}

\begin{theorem} Let $d\geq 3$ and $k\in\mathbb{N}_0$ such that $\widetilde{I^k}=I^k$. If $x_1,...,x_d$ is a tame superficial sequence of $I$ and $J=(x_1,...,x_d)$ such that $J\cap I^n=JI^{n-1}$ for all $n\geq{k+2}$, then ${\fa}^mI^n:x_1={\fa}^mI^{n-1}$ for all $n\geq{k+1}$ and all $m\in\mathbb{N}_0$, where $\fa=(x_2,...,x_d)$. In particular, $\widetilde{I^n}=I^n$ for all $n\geq k$.
\end{theorem}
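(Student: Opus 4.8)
The entire content of the statement is the family of colon equalities $C(m,n)\colon \fa^m I^n:x_1=\fa^m I^{n-1}$ for $n\ge k+1$ and $m\ge 0$; the inclusion $\supseteq$ is automatic since $x_1\fa^m I^{n-1}\subseteq\fa^m I^n$, so only $\subseteq$ ever needs proof, and the final assertion will be squeezed out of the case $m=0$. The plan is to prove all $C(m,n)$ by a combined induction and then to deduce $\widetilde{I^n}=I^n$ for $n\ge k$ from the relations with $m=0$ alone. I would first dispose of the base case $C(0,k+1)$, which is exactly where $\widetilde{I^k}=I^k$ enters: given $y$ with $x_1y\in I^{k+1}$ and $m$ so large that $k+m$ exceeds the superficial bound of $x_1$, every $z\in I^m$ satisfies $x_1(yz)=(x_1y)z\in I^{k+m+1}$, hence $yz\in I^{k+m+1}:x_1=I^{k+m}$; thus $yI^m\subseteq I^{k+m}$ and $y\in\widetilde{I^k}=I^k$. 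This proves $I^{k+1}:x_1\subseteq\widetilde{I^k}=I^k$, i.e. $C(0,k+1)$.

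The mechanism linking consecutive levels comes next. For $n\ge k+1$ one has $n+1\ge k+2$, so the hypothesis $J\cap I^{n+1}=JI^n$ applies; since $x_1y\in I^{n+1}$ forces $x_1y\in(x_1)\cap I^{n+1}\subseteq J\cap I^{n+1}$, we get $I^{n+1}:x_1=(J\cap I^{n+1}):x_1=JI^n:x_1=I^n+(\fa I^n:x_1)$. Therefore $C(1,n)$ upgrades at once to $C(0,n+1)$, because $\fa I^{n-1}\subseteq I^n$. Thus the $m=0$ relations for all levels $\ge k+1$ follow from the $m=1$ relations together with the base case, and I would organize everything as a double induction: outer on $n\ge k+1$, inner on $m\ge 0$, where at each level $n$ the case $m=0$ is supplied either by the base case (for $n=k+1$) or by $C(1,n-1)$ from the previous level.

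The technical core is the passage to $m\ge 1$, and here I would divide by $x_1$ using that $x_1,\dots,x_d$ is a system of parameters, hence a regular sequence, in the Cohen--Macaulay ring $R$. Writing $x_1y=\sum_{i=2}^d x_i w_i$ with $w_i\in\fa^{m-1}I^n$ exhibits a syzygy of $x_1,\dots,x_d$, which must be a combination of the Koszul relations $x_j\epsilon_i-x_i\epsilon_j$; reading off the first coordinate yields $y=\sum_{i=2}^d c_i x_i\in\fa$, while the remaining coordinates express the $w_i$ through the coefficients $c_i$. The aim is to show that the $c_i$ may be taken in $\fa^{m-1}I^{n-1}$, for then $y\in\fa\cdot\fa^{m-1}I^{n-1}=\fa^m I^{n-1}$; this is to be forced by feeding in the already-established relations of strictly smaller $\fa$-degree, namely $C(m-1,\cdot)$ and, at the bottom, the $m=0$ relations, together with the regularity of $x_2,\dots,x_d$ modulo $x_1$.

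I expect this last step to be the main obstacle: the syzygy representation is highly non-unique, and the naive reading only places the $c_i$ in $R$, so the hypotheses must be used to pin them down to the filtration level $\fa^{m-1}I^{n-1}$, while simultaneously keeping the induction well-founded so that each instance of $C(m,n)$ invokes only strictly simpler instances (smaller $m$ at the same level, or any $m$ at level $n-1$). Once all $C(m,n)$ are in hand, in particular $I^{p+1}:x_1=I^p$ for every $p\ge k$, the final assertion is routine: if $y\in\widetilde{I^n}$ with $n\ge k$ then $yI^j\subseteq I^{n+j}$ for some $j$, so $yx_1^j\in I^{n+j}$, and stripping off the powers of $x_1$ one at a time via $I^{p+1}:x_1=I^p$ — legitimate since the exponent of $I$ stays $\ge k+1$ throughout, as $n\ge k$ — yields $y\in I^n$. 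Hence $\widetilde{I^n}\subseteq I^n$, and $\widetilde{I^n}=I^n$ for all $n\ge k$.
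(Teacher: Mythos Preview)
Your base case $C(0,k+1)$ and the final deduction of $\widetilde{I^n}=I^n$ from the $m=0$ relations are both correct, but the heart of the argument---the passage to $m\ge 1$---is not carried out, and the induction scheme you propose (``smaller $m$ at the same level, or any $m$ at level $n-1$'') is not the one that works. You write $x_1y=\sum_{i\ge 2}x_iw_i$ with $w_i\in\fa^{m-1}I^n$ and extract $y=\sum c_ix_i$ from Koszul syzygies, hoping to force $c_i\in\fa^{m-1}I^{n-1}$; but, as you yourself note, the representation is highly non-unique and there is no mechanism visible in your sketch for imposing that filtration level on the $c_i$ from instances with \emph{smaller} $m$. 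Relatedly, your base case delivers only $C(0,k+1)$, whereas launching the induction requires \emph{all} $C(m,k+1)$; at level $n=k+1$ there is no ``level $n-1$'' to fall back on, so your scheme has nothing to feed into $C(1,k+1)$.

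The paper runs the induction in the opposite direction in $m$. It first proves $C(m,k+1)$ for all $m$ at once (via an auxiliary lemma giving $\fa^m\widetilde{I^{k+1}}:x_1=\fa^m\widetilde{I^k}$, together with $\widetilde{I^k}=I^k$). For the step from level $n$ to level $n+1$ it shows, for each $m$, that $x_1y\in\fa^mI^{n+1}$ forces $y\in\fa^m$; writing both $y$ and $x_1y$ as $\fa$-forms of degree $m$ and comparing coefficients (using that $x_1,\dots,x_d$ is regular) places the coefficients of $x_1y$ in $J\cap I^{n+1}=JI^n$, whence $x_1y\in\fa^mJI^n=\fa^{m+1}I^n+x_1\fa^mI^n$. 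Thus $\fa^mI^{n+1}\cap(x_1)\subseteq x_1(\fa^{m+1}I^n:x_1)+x_1\fa^mI^n$, and one invokes $C(m+1,n)$---the \emph{previous} level with \emph{higher} $\fa$-exponent---to finish. The key idea you are missing is precisely this: raise $m$ while dropping $n$, rather than trying to lower $m$; then a clean strong induction on $n$, over all $m$ simultaneously, closes the argument.
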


\begin{proof} We will proceed by induction on $n$. Assume $n=k+1$. Then by [\ref{Ma}, Lemma 2.7] and our assumption we have ${\fa}^mI^{k+1}:x_1\subseteq{\fa}^m\widetilde{I^{k+1}}:x_1={\fa}^m\widetilde{I^k}=
{\fa}^mI^k$. Therefore ${\fa}^mI^{k+1}:x_1={\fa}^mI^k$ for all $m\in\mathbb{N}_0$. Assume $n\geq{k+1}$ and that for all $t$ with ${k+1}\leq t\leq n$ and all $m\in\mathbb{N}_0$, ${\fa}^mI^t:x_1={\fa}^mI^{t-1}$. We show that for all $m\in\mathbb{N}_0$, ${\fa}^mI^{n+1}:x_1={\fa}^mI^n$. Let $yx_1$ be an element of ${\fa}^mI^{n+1}$. Then $yx_1\in{\fa}^m$ and by using [\ref{Ma}, Lemma 2.1] we obtain $y\in{\fa}^m$. Therefore we can write the expression, $y=\sum_{i_2+...+i_d=m}r_{i_2...i_d}x_2^{i_2}...x_d^{i_d}$. Since the element $yx_1$ belongs to ${\fa}^mI^{n+1}$ too, we obtain the following equalities $$\sum_{i_2+...+i_d=m}r_{i_2...i_d}x_1x_2^{i_2}...x_d^{i_d}=yx_1=\sum_{i_2+...+i_d=m}s_{i_2...i_d}x_2^{i_2}...x_d^{i_d},$$ where $s_{i_2...i_d}\in I^{n+1}$ for all $i_2,...,i_d$ such that $i_2+...+i_d=m$. As $x_1,...,x_d$ is a regular sequence in $R$, by equating coefficients in the previous expressions, we get $r_{i_2...i_d}x_1-s_{i_2...i_d}\in(x_2,...,x_d)$ for all $i_2,...,i_d$ such that $i_2+...+i_d=m$. Hence $s_{i_2...i_d}\in J\cap I^{n+1}$ and by our assumption we obtain $s_{i_2...i_d}\in JI^n$ for all  $i_2,...,i_d$ such that $i_2+...+i_d=m$. Hence, going back to the equalities we wrote for $yx_1$, we obtain $yx_1\in{\fa}^mJI^n={\fa}^{m+1}I^n+x_1{\fa}^mI^n$. Therefore we have $${\fa}^mI^{n+1}\cap (x_1)\subseteq{\fa}^{m+1}I^n\cap{(x_1)}+x_1{\fa}^mI^n=x_1({\fa}^{m+1}I^n:x_1)+x_1{\fa}^mI^n.$$
 By applying the inductive hypothesis we get ${\fa}^mI^{n+1}\cap{(x_1)}\subseteq x_1{\fa}^{m+1}I^{n-1}+x_1
 {\fa}^mI^n=x_1{\fa}^mI^n$. This proves that ${\fa}^mI^{n+1}:x_1\subseteq{\fa}^mI^n$ and so ${\fa}^mI^{n+1}:x_1={\fa}^mI^n$ for all $m\in\mathbb{N}_0$. In particular, if we set $m=0$, then $I^{n+1}:x_1=I^n$ for all $n>k$ and so by [\ref{P}, Corollary 2.7], $\widetilde{I^n}=I^n$ for all $n\geq k$, as desired.

\end{proof}
The following result easily follows by Theorem 2.13.
\begin{corollary} Let $x_1,...,x_d$ be a tame superficial sequence of $I$ and $J=(x_1,...,x_d)$.
\begin{itemize}
\item[(i)] If $\widetilde{I}=I$ and $J\cap I^{n}=JI^{n-1}$ for all $n\geq 3$, then $\widetilde{I^n}=I^n$ for all $n\geq 1$. In particular $\depth G(I)\geq 1$.
\item[(ii)]If $r_J(I)=2$, then $\widetilde{I}=I$ if and only if $\depth G(I)\geq 1$.
\item[(iii)]
Let $k\in\mathbb{N}_0$ such that $r_J(I)=k+1$ and $\widetilde{I^k}=I^k$. Then $\widetilde{I^n}=I^n$ for all $n\geq k$.

\end{itemize}
\end{corollary}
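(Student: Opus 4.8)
The plan is to read all three parts off Theorem 2.13, using the Heinzer--Lantz--Shah criterion from [\ref{Hls}] (quoted in the introduction) to move between ``$\widetilde{I^n}=I^n$ for all $n$'' and ``$\depth G(I)\geq 1$''. The one observation that makes everything go is that a prescribed reduction number forces the intersection hypothesis of Theorem 2.13 for large $n$ for free: if $r_J(I)=k+1$, then $I^{n+1}=JI^n$ for all $n\geq k+1$, so $I^n\subseteq J$ and hence $J\cap I^n=I^n=JI^{n-1}$ for every $n\geq k+2$.

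I would start with (iii), which is the cleanest specialization. Assuming $r_J(I)=k+1$ and $\widetilde{I^k}=I^k$, the remark above supplies $J\cap I^n=JI^{n-1}$ for all $n\geq k+2$, so Theorem 2.13 applies and gives $\widetilde{I^n}=I^n$ for all $n\geq k$. Part (i) is then the case $k=1$, the only difference being that the intersection condition is now a hypothesis rather than a consequence: from $\widetilde{I^1}=\widetilde{I}=I=I^1$ and $J\cap I^n=JI^{n-1}$ for all $n\geq 3=1+2$, Theorem 2.13 yields $\widetilde{I^n}=I^n$ for all $n\geq 1$, hence for all $n\in\mathbb{N}_0$, and then $\depth G(I)\geq 1$ by [\ref{Hls}].

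For (ii) I would route both implications through (i). If $\depth G(I)\geq 1$, then [\ref{Hls}] gives $\widetilde{I^n}=I^n$ for every $n$, in particular $\widetilde{I}=I$. Conversely, suppose $\widetilde{I}=I$ and $r_J(I)=2$; applying the opening remark with $k=1$ we get $J\cap I^n=JI^{n-1}$ for all $n\geq 3$, so the hypotheses of (i) are met and we conclude $\widetilde{I^n}=I^n$ for all $n$, that is, $\depth G(I)\geq 1$.

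The only genuine obstacle is that Theorem 2.13 is stated for $d\geq 3$, so the low-dimensional cases must be supplied by hand. For $d=2$ I would substitute Proposition 2.1 and Corollary 2.2 for (i) and (iii), and Corollary 2.3 for (ii), after using [\ref{P}, Corollary 2.7] to convert the Ratliff--Rush hypothesis into the colon equalities $I^n:x_1=I^{n-1}$ those results require; for $d=1$, where $J=(x_1)$ with $x_1$ a nonzerodivisor, the condition $J\cap I^n=JI^{n-1}$ is literally $I^n:x_1=I^{n-1}$, and the conclusion follows once more from [\ref{P}, Corollary 2.7]. This is bookkeeping across dimensions rather than a new idea.
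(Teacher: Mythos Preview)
Your proposal is correct and matches the paper's own approach: the paper states only that the corollary ``easily follows by Theorem 2.13'' and gives no further argument. Your derivation of (i), (ii), (iii) from Theorem 2.13, together with the observation that $r_J(I)=k+1$ forces $J\cap I^n=I^n=JI^{n-1}$ for all $n\geq k+2$, is exactly what is intended.

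In fact you are more careful than the paper on one point: Theorem 2.13 is stated only for $d\geq 3$, and the paper does not explicitly address $d\leq 2$. Your plan to cover $d=2$ via Proposition 2.1/Corollary 2.2/Corollary 2.3, using $I^{k+1}:x_1\subseteq\widetilde{I^k}$ to convert the hypothesis $\widetilde{I^k}=I^k$ into $I^{k+1}:x_1=I^k$, is the right idea. One small caveat: for part (iii) in dimension two you cannot cite Proposition 2.1 or Corollary 2.2 verbatim, since those conclude $\widetilde{I^n}=I^n$ for \emph{all} $n\geq 1$ and require the colon conditions down to $n=1$; rather, you need the inductive step isolated in the proof of Proposition 2.1 (namely, $I^{k+1}:x_1=I^k$ and $J\cap I^n=JI^{n-1}$ for $n\geq k+2$ together yield $I^{n}:x_1=I^{n-1}$ for all $n\geq k+1$), after which [\ref{P}, Corollary 2.7] gives $\widetilde{I^n}=I^n$ for $n\geq k$. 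This is a minor rewording of what you already wrote, not a genuine gap.
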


The following example shows that the equality of Corollary 2.14(ii) maybe happen.
\begin{example}
Let $K$ be a field, $R=K[\![ x,y]\!]$, $I=(x^6,x^4y^2,x^3y^3,x^2y^4,xy^5,y^6)$ and $J=(x^6,y^6+x^4y^2)$. Then $r_J(I)=2$, $\depth G(I)=1$ and so $G(I)$ is not C.M.
\end{example}
\section{Invariance of a length}
Let $J=(x_1,...,x_d)$ be a minimal reduction of $I$. In [\ref{W}] Wang defined the following exact sequence for all $n,k$  $$0\longrightarrow T_{n,k}\longrightarrow \oplus^{{k+d-1}\choose{d-1}} I^n/{JI^{n-1}}\overset{\phi_k}{\longrightarrow} J^kI^n/{J^{k+1}I^{n-1}}
\longrightarrow 0,\ \ \ (*)$$
where $\phi_k=(x_1^k,x_1^{k-1}x_2,...,x_1^{k-1}x_d,...,x_d^k)$ and $T_{n,k}=\ker(\phi_k)$. He also showed that $T_{1,k}=0$ for all $k$ and if $d=1$, then $T_{n,k}=0$ for all $n,k$. By using the exact sequence $(*)$, we drive the following easy lemma and we leave the proof to the reader.
\begin{lemma} Let $t\in\mathbb{N}_0$ and $J=(x_1,...,x_d)$ be a minimal reduction of $I$. Then we have the following:
\begin{itemize}
\item[(i)] If $J\cap I^n=JI^{n-1}$ for $n=1,...,t$, then $T_{n,k}=0$ for $n=1,...,t$ and all $k$.
\item[(ii)] If $I$ is integrally closed, then $T_{2,k}=0$ for all $k$. In particular, if $I=m$, then $T_{2,k}=0$ for all $k$.
\end{itemize}
\end{lemma}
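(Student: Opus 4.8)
The plan is to read both statements off the single fact that $T_{n,k}=\ker\phi_k$, so that in each case I need only check that $\phi_k$ is injective. Two ingredients will be used throughout. First, $J=(x_1,\dots,x_d)$ is generated by a system of parameters of the Cohen--Macaulay ring $R$, hence by a regular sequence, so $\operatorname{gr}_J(R)\cong(R/J)[X_1,\dots,X_d]$ is a polynomial ring; in particular the $R/J$-linear map sending the basis vector $e_\alpha$ to the class of $x^\alpha$ is an isomorphism $(R/J)^{\binom{k+d-1}{d-1}}\xrightarrow{\ \sim\ }J^k/J^{k+1}$ for every $k$. Second, the hypothesis $J\cap I^m=JI^{m-1}$ will enter only through the single identity $J\cap I^n=JI^{n-1}$ for the relevant $n$.

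For (i), fix $n\le t$ and $k$, and let $(\overline{a_\alpha})_{|\alpha|=k}$ lie in $\ker\phi_k$; lift it to elements $a_\alpha\in I^n$ with $\sum_{|\alpha|=k}x^\alpha a_\alpha\in J^{k+1}I^{n-1}$. First I would put the right-hand side into monomial form: writing it as $\sum_{|\beta|=k+1}x^\beta c_\beta$ with $c_\beta\in I^{n-1}$ and factoring one variable $x_{i}$ out of each $x^\beta$, I can regroup it as $\sum_{|\alpha|=k}x^\alpha d_\alpha$ with every $d_\alpha\in JI^{n-1}$. Setting $b_\alpha=a_\alpha-d_\alpha\in I^n$ then produces a genuine syzygy $\sum_{|\alpha|=k}x^\alpha b_\alpha=0$ among the degree-$k$ monomials. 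Reducing this relation modulo $J^{k+1}$ and invoking the isomorphism $(R/J)^{\binom{k+d-1}{d-1}}\cong J^k/J^{k+1}$ forces each $b_\alpha\in J$; since also $b_\alpha\in I^n$, the hypothesis gives $b_\alpha\in J\cap I^n=JI^{n-1}$, whence $a_\alpha=b_\alpha+d_\alpha\in JI^{n-1}$. Thus $(\overline{a_\alpha})=0$ and $T_{n,k}=0$.

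For (ii), the idea is that integral closedness feeds exactly the case $t=2$ of part (i). Since $J\cap I=J=JI^{0}$ is trivial, it suffices to establish $J\cap I^2=JI$; granting this, part (i) with $t=2$ yields $T_{2,k}=0$ for all $k$. The inclusion $JI\subseteq J\cap I^2$ is clear, and for the reverse I would use the known result (Itoh, [\ref{I}]) that $J\cap I^2=JI$ whenever $I$ is integrally closed; the argument I have in mind notes that $I^2\subseteq\overline{JI}$ because $J$ is a reduction of $I$, so $J\cap I^2\subseteq J\cap\overline{JI}$, and then uses $\overline{I}=I$ to collapse $J\cap\overline{JI}$ back to $JI$. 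The special case $I=\fm$ requires nothing extra, $\fm$ being integrally closed.

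I expect the decisive point to be the passage from the syzygy $\sum x^\alpha b_\alpha=0$ to $b_\alpha\in J$: everything there rests on the regular-sequence hypothesis through the polynomial structure of $\operatorname{gr}_J(R)$, while the monomial regrouping that precedes it is only bookkeeping. In (ii) the corresponding crux is the identity $J\cap I^2=JI$ for integrally closed $I$, which is where integral closedness genuinely enters; the reduction to part (i) is then purely formal.
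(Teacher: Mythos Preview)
The paper does not give its own proof of this lemma; it explicitly says ``we leave the proof to the reader.'' Your argument is correct and is presumably the elementary computation the author had in mind.

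For (i), your use of the polynomial structure of $\operatorname{gr}_J(R)$ is exactly the right tool: the regular-sequence hypothesis forces each $b_\alpha\in J$, and then the single identity $J\cap I^n=JI^{n-1}$ finishes it. The monomial regrouping is indeed just bookkeeping, as you say.

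For (ii), your reduction to part (i) via $J\cap I^2=JI$ is the intended route. One small remark on your sketch of that identity: the passage from $J\cap\overline{JI}$ (equivalently $J\cap\overline{I^2}$) to $J\overline{I}$ is itself a nontrivial theorem of Itoh and, independently, Huneke, valid for parameter ideals in a Cohen--Macaulay ring; it does not follow from the inclusions you wrote alone. You cite Itoh for the conclusion, which is fine, but be aware that the line ``uses $\overline{I}=I$ to collapse $J\cap\overline{JI}$ back to $JI$'' is invoking that theorem rather than proving it.
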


The following lemma is known see the proof of [\ref{Cpv}, Proposition 2.1].

\begin{lemma} Let $J=(x_1,...,x_d)$ be a minimal reduction of $I$. Then

$\lambda(I/J)=e_0(I)-\lambda(R/I)$ and $\lambda(I^{n+1}/{J^{n}I})=e_0(I){{n+d-1}\choose{d}}+\lambda(R/I){{n+d-1}\choose{d-1}}-\lambda(R/I^{n+1})$ for $n\geq 1$  which are independent of $J$.
\end{lemma}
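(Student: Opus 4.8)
The plan is to reduce both formulas to length computations in the associated graded ring of the parameter ideal $J$, exploiting that $x_1,\dots,x_d$ is a regular sequence. Since $I$ is $\fm$-primary and $J$ is a minimal reduction, $x_1,\dots,x_d$ is a system of parameters; as $R$ is Cohen-Macaulay, it is in fact a regular sequence. Consequently $e_0(J)=\lambda(R/J)$, and because $J$ is a reduction of $I$ we have $e_0(I)=e_0(J)=\lambda(R/J)$. The first formula is then immediate from the short exact sequence $0\longrightarrow I/J\longrightarrow R/J\longrightarrow R/I\longrightarrow 0$, which gives $\lambda(I/J)=\lambda(R/J)-\lambda(R/I)=e_0(I)-\lambda(R/I)$.

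For the second formula, note that $J^{n+1}\subseteq J^nI\subseteq I^{n+1}$ since $J\subseteq I$, so the short exact sequence $0\longrightarrow I^{n+1}/J^nI\longrightarrow R/J^nI\longrightarrow R/I^{n+1}\longrightarrow 0$ reduces the problem to evaluating $\lambda(R/J^nI)$, which I would split as $\lambda(R/J^nI)=\lambda(R/J^n)+\lambda(J^n/J^nI)$. The key structural input is that, since $J$ is generated by a regular sequence, the associated graded ring $\bigoplus_{i\ge 0}J^i/J^{i+1}$ is a polynomial ring over $R/J$ in $d$ variables; hence $J^n/J^{n+1}$ is a free $R/J$-module of rank $\binom{n+d-1}{d-1}$ with basis the degree-$n$ monomials in $x_1,\dots,x_d$. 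Summing $\lambda(J^i/J^{i+1})=\binom{i+d-1}{d-1}\lambda(R/J)$ over $i=0,\dots,n-1$ and applying the hockey-stick identity yields $\lambda(R/J^n)=\lambda(R/J)\binom{n+d-1}{d}=e_0(I)\binom{n+d-1}{d}$.

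The remaining term is $\lambda(J^n/J^nI)$. Since $J^{n+1}\subseteq J^nI$, I would pass to $J^n/J^{n+1}$ and identify $J^nI/J^{n+1}$ inside the free $R/J$-module $J^n/J^{n+1}=\bigoplus_{|\alpha|=n}(R/J)\underline{x}^\alpha$ as the submodule $\bigoplus_{|\alpha|=n}(I/J)\underline{x}^\alpha$; freeness guarantees this is a genuine direct sum, so $\lambda(J^nI/J^{n+1})=\binom{n+d-1}{d-1}\lambda(I/J)$. Subtracting from $\lambda(J^n/J^{n+1})=\binom{n+d-1}{d-1}\lambda(R/J)$ and using $\lambda(R/J)-\lambda(I/J)=\lambda(R/I)$ gives $\lambda(J^n/J^nI)=\binom{n+d-1}{d-1}\lambda(R/I)$. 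Combining with the preceding paragraph produces $\lambda(R/J^nI)=e_0(I)\binom{n+d-1}{d}+\lambda(R/I)\binom{n+d-1}{d-1}$, and feeding this back into the short exact sequence yields exactly the asserted expression for $\lambda(I^{n+1}/J^nI)$. Independence of $J$ is then automatic, since every term on the right-hand side of both formulas, namely $e_0(I)$, $\lambda(R/I)$, and $\lambda(R/I^{n+1})$, is intrinsic to $I$.

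I expect the main obstacle to be the clean identification of $J^nI/J^{n+1}$ as a direct sum of copies of $I/J$: this is precisely the point where the regularity of $x_1,\dots,x_d$ is essential, since it is the freeness of $J^n/J^{n+1}$ over $R/J$ that prevents collapsing among the pieces $\underline{x}^\alpha(I/J)$ and makes the length additive.
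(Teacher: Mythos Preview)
Your argument is correct. The paper does not give its own proof of this lemma; it simply records the formulas as known and points to the proof of \cite[Proposition~2.1]{Cpv}. Your write-up supplies a complete elementary derivation: you use that $x_1,\dots,x_d$ is a regular sequence (Cohen--Macaulayness plus $J$ being a parameter ideal) to get $e_0(I)=e_0(J)=\lambda(R/J)$ and to identify $J^n/J^{n+1}$ with a free $R/J$-module of rank $\binom{n+d-1}{d-1}$, and then compute $\lambda(R/J^nI)$ by splitting it as $\lambda(R/J^n)+\lambda(J^n/J^nI)$. The only step that required care---the identification of $J^nI/J^{n+1}$ with $\bigoplus_{|\alpha|=n}(I/J)\underline{x}^{\alpha}$ inside the free module $J^n/J^{n+1}$---is handled correctly: since $J\subseteq I$ one has $J^{n+1}\subseteq J^nI$ and $J^nI=\sum_{|\alpha|=n}I\underline{x}^{\alpha}$, and freeness over $R/J$ makes the induced decomposition a genuine direct sum with each summand isomorphic to $I/J$. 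So what you have is not a different route so much as a fully spelled-out version of an argument the paper chose to outsource.
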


In [\ref{P1}], Puthenpurakal proved that $\lambda({\fm}^3/{J{\fm}^2})$ is independent of the minimal reduction $J$ of $\fm$ and subsequently Ananthnarayan and Huneke [\ref{Ah}] extend it for $n$-standard admissible $I$-filtrations.\\
The following result was proved in [\ref{P1}, Theorem 1] and [\ref{Ah}, Theorem 3.5]. We reprove it with a much easier proof.
\begin{theorem}
Let $t\in\mathbb{N}_0$ and $J=(x_1,...,x_d)$ be a minimal reduction of $I$. If $J\cap I^n=JI^{n-1}$ for $n=1,...,t$, then $\lambda(I^{n+1}/{JI^n})$ is independent of $J$ for $n=1,...,t$.
\end{theorem}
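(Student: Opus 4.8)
The plan is to run an induction on $n$, reducing each $\lambda(I^{n+1}/JI^n)$ to quantities already known to be independent of $J$. The engine is the pair of results just established: Lemma 3.2 tells us that $\lambda(I^{n+1}/J^nI)$ is independent of $J$ for every $n\geq 1$, while Lemma 3.1(i) tells us that under the hypothesis $J\cap I^m=JI^{m-1}$ for $m=1,\ldots,t$ we have $T_{m,k}=0$ for all such $m$ and all $k$. The vanishing of $T_{m,k}$ turns Wang's sequence $(*)$ into an isomorphism, yielding the numerical identity
$$\lambda(J^kI^m/{J^{k+1}I^{m-1}})=\binom{k+d-1}{d-1}\lambda(I^m/{JI^{m-1}})$$
valid for $1\leq m\leq t$ and all $k\geq 0$.

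First I would record the chain of submodules
$$I^{n+1}\supseteq JI^n\supseteq J^2I^{n-1}\supseteq\cdots\supseteq J^nI,$$
each containment being immediate from $J\subseteq I$, and add the lengths of its successive quotients to obtain
$$\lambda(I^{n+1}/{J^nI})=\sum_{k=0}^{n-1}\lambda(J^kI^{n+1-k}/{J^{k+1}I^{n-k}}).$$
The point is that the extreme term $k=0$ is precisely $\lambda(I^{n+1}/JI^n)$, the quantity we are after, while every other summand has the form $\lambda(J^kI^m/{J^{k+1}I^{m-1}})$ with $m=n+1-k\leq n\leq t$, hence falls in the range where the displayed identity applies.

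Next, for $1\leq n\leq t$ I would substitute the identity into each term with $k\geq 1$ and solve for the $k=0$ term, arriving at
$$\lambda(I^{n+1}/{JI^n})=\lambda(I^{n+1}/{J^nI})-\sum_{k=1}^{n-1}\binom{k+d-1}{d-1}\lambda(I^{n+1-k}/{JI^{n-k}}).$$
On the right the leading term is independent of $J$ by Lemma 3.2, and each summand $\lambda(I^{n+1-k}/JI^{n-k})$ carries index $n-k$ lying strictly between $1$ and $n-1$, so it is independent of $J$ by the inductive hypothesis; the base case $n=1$ has an empty sum and is Lemma 3.2 outright. This closes the induction and proves the theorem.

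I do not anticipate a serious obstacle, since the two supporting lemmas carry the structural weight; the one point that demands care is the bookkeeping of indices. Specifically, one must verify that the isomorphism coming from $(*)$ is invoked only for $m\leq n\leq t$, and never for $m=n+1$, so that the whole range $n=1,\ldots,t$ is covered rather than merely $n\leq t-1$. This works precisely because the $m=n+1$ contribution is the very unknown being solved for and is instead furnished by Lemma 3.2, which requires no hypothesis on $J$.
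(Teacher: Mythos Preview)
Your proof is correct and follows essentially the same approach as the paper: telescope $\lambda(I^{n+1}/J^nI)$ along the chain $I^{n+1}\supseteq JI^n\supseteq\cdots\supseteq J^nI$, use Lemma~3.1(i) together with Wang's sequence $(*)$ to replace each intermediate quotient by $\binom{k+d-1}{d-1}\lambda(I^{n+1-k}/JI^{n-k})$, and conclude by Lemma~3.2 and induction on $n$. Your write-up is in fact more careful about the index bookkeeping than the paper's, correctly observing that $(*)$ is only applied at levels $m=n+1-k\leq n\leq t$.
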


\begin{proof}
We have $\lambda(I^{n+1}/{JI^{n}})=\lambda(I^{n+1}/{J^{n}I})-\sum_{k=1}^{n-1}
\lambda(J^kI^{n+1-k}/{J^{k+1}I^{n-k}})$. Therefore by Lemma 3.1 and the exact sequence $(*)$, we obtain $\lambda(I^{n+1}/{JI^{n}})=\lambda(I^{n+1}/{J^{n}I})-\sum_{k=1}^{n-1}{{k+d-1}\choose{d-1}}\lambda(I^{n+1-k}/{JI^{n-k}})$. Now by using Lemma 3.2 and using induction on $n$, the result follows.

\end{proof}

The following example is a counterexample for Question 3 of [\ref{P1}] and it also says that Theorem 1.8 of [\ref{Ah}] does not hold in general.
The computations are performed by using Macaulay2 [\ref{Gs}], CoCoA [\ref{Ab}] and Singular [\ref{Glp}].

\begin{example} Let $K$ be a field and $S=K[\![ x,y,z,u,v ]\!]$, where $I=(x^2+y^5,xy+u^4,xz+v^3)$. Then $R=S/I$ is a Cohen-Macaulay local ring of dimension two, ideals $J_1=(y,z)R$ and $J_2=(z,u)R$ are minimal reduction of $\fm=(x,y,z,u,v)R$ and $\lambda({\fm}^4/{J_1{\fm}^3})=17$, $\lambda({\fm}^4/{J_2{\fm}^3})=20$.
\end{example}

\begin{acknowledgement} This paper was done while I was visiting the University of Osnabruck. I would like to thank the Institute of Mathematics of the University of Osnabruck for hospitality and
partially financial support and I also would like to express my very great appreciation to Professor Winfried Bruns for his valuable and constructive suggestions during the planning and development of this research work. Moreover, I would like to thank deeply grateful to the referee for the careful reading of the manuscript and the helpful suggestions.
\end{acknowledgement}


\end{document}